\newtheorem{theorem}{Theorem}[section]
\newtheorem{corollary}[theorem]{Corollary}
\newtheorem{lemma}[theorem]{Lemma}
\newtheorem{assumption}[theorem]{Assumption}
\theoremstyle{remark}
\newtheorem{remark}{Remark}[section]
\newcommand{\ba}{\begin{enumerate}[label={\normalfont (\alph*)},ref={\normalfont \alph*}]}
\newcommand{\bei}{\begin{enumerate}[label={\normalfont (\roman*)},ref={\normalfont \roman*}]}
\newcommand{\be}{\begin{enumerate}[label={\normalfont (\arabic*)},ref={\normalfont \arabic*}]}
\newcommand{\ee}{\end{enumerate}}
\newcommand{\RR}{\ensuremath{\mathbb{R}}}
\newcommand{\Lp}{\left(}
\newcommand{\Rp}{\right)}
\newcommand{\LP}{\left\{ }
\newcommand{\RP}{\right\}}
\newcommand{\LT}{\left[}
\newcommand{\RT}{\right]}
\DeclareMathOperator{\Exp}{\mathbb{E}}
\DeclareMathOperator{\Prob}{\mathbb{P}}
\newcommand{\bds}{\boldsymbol}
\newcommand{\C}{\bds{C}}
\newcommand{\R}{\bds{R}}
\newcommand{\Chi}{\bds{\chi}}
\newcommand{\N}{\bds{N}}
\newcommand{\FF}{\mathcal{F}}
\newcommand{\indic}[1]{\mathbbm{1}_{\LT #1 \RT}}
\newcommand{\bone}{\mathbbm{1}}
\newcommand{\mynorm}[1]{\rho\Lp #1\Rp}
\newcommand{\Lone}[1]{\left\|#1\right\|}
\newcommand{\csd}[1]{\frac{\C_{#1}}{S_{#1}}}
\newcommand{\csl}[1]{\C_{#1}/S_{#1}}
\newcommand{\cslnmi}{C_{n-1,i}/S_{n-1}}
\newcommand{\mgdh}{\bds{\delta}}
\newcommand{\bxi}{\bds{\xi}}
\newcommand{\bH}{\bds{H}}
\newcommand{\lpf}{\bds{\pi}_{\bH}}
\newcommand{\pfval}{\lambda_{\bH}}
\newcommand{\bX}{\bds{X}}
\newcommand{\z}{\bds{z}}
\newcommand{\x}{\bds{x}}
\newcommand{\bgamma}{\bds{\gamma}}
\newcommand{\bGamma}{\bds{\Gamma}}
\newcommand{\CExp}[2]{\Exp\Lp #1\middle\vert\FF_{#2}\Rp}
\newcommand{\Tau}[2]{\tau_{#1}\Lp #2\Rp}
\newcommand{\0}{\bds{0}}
\newcommand{\1}{\bds{1}}
\newcommand{\hh}{h_{\bH}}
\newcommand{\e}{\bds{e}}
\newcommand{\X}{\bds{X}}
\newcommand{\Loc}{\bds{L}}
\newcommand{\tH}{\widetilde{\bH}}
\newcommand\numberthis{\addtocounter{equation}{1}\tag{\theequation}}
\begin{document}
\bibliographystyle{elsarticle-num}

\title[Randomized Urn Models and Elephant Random Walk]{Almost Sure Convergence of Randomized Urn Models\\ with Application to Elephant Random Walk}
\date{}
\author{Ujan Gangopadhyay}
\address{Department of Mathematics, \ National University of Singapore.} 
\email{ujan@nus.edu.sg}

\author{Krishanu Maulik}
\address{Theoretical Statistics and Mathematics Unit, \ Indian Statistical Institute, \ Kolkata, India.} 
\email{krishanu@isical.ac.in.}

\begin{abstract}
We consider a randomized urn model with objects of finitely many colors. The replacement matrices are random, and are conditionally independent of the color chosen given the past. Further, the conditional expectations of the replacement matrices are close to an almost surely irreducible matrix. We obtain almost sure and $L^1$ convergence of the configuration vector, the proportion vector and the count vector. We show that first moment is sufficient for i.i.d.\ replacement matrices independent of past color choices. This significantly improves the similar results for urn models obtained in \cite{AthreyaNey} requiring $L\log_+ L$ moments. For more general adaptive sequence of replacement matrices, a little more than $L\log_+ L$ condition is required. Similar results based on $L^1$ moment assumption alone has been considered independently and in parallel in \cite{ZhangNew}. Finally, using the result, we study a delayed elephant random walk on the nonnegative orthant in $d$ dimension with random memory.
\end{abstract}

\keywords{urn model; random replacement matrix; irreducibility; stochastic approximation; elephant random walk.}
\subjclass[2010]{Primary 62L20; Secondary 60F15; 60G42.}

\maketitle


\section{Introduction}

We consider an urn with objects of $K$ colors indexed by $[K]=\{1, 2, \ldots, K\}$. At discrete time points, we select a color with probability proportional to its content in the urn, and add more amount of object of each color to the urn following a randomized rule. For $n\geq 0$, consider the configuration vector $\C_n:=(C_{n1},\dots,C_{nK})$ where $C_{ni}$ is the amount of color $i$ in the urn at epoch $n$. Note that the amount added can be any nonnegative real value. Let $\Chi_n$ be the row vector indicating the color drawn at the $n$-th trial, so that $\Chi_n=\e_i$, the $i$-th coordinate vector in $\RR^K$, if the selected color is $i$. The randomized rule for adding content to the urn is encoded by a sequence of $K\times K$ random matrices $(\R_n)_{n\geq 0}$, called the replacement matrices. If $\Chi_n=\e_i$ then $R_{n;ij}$ amount of color $j$ is added to the urn, for each $j\in[K]$. Therefore, for $n\geq 1$
\begin{equation} \label{EvolutionEqn}
\C_n = \C_{n-1} + \Chi_n\R_n.
\end{equation}

The total content of the urn after $n$-th trial is $S_n = \sum_{i=1}^K C_{ni}$. The count vector after $n$-th trial, $\N_n$, giving the number of times each color is drawn, is $\N_n = \sum_{k=1}^n \Chi_k$. The vectors considered in this article are row vectors, and any column vector is indicated by transpose. We  also use $\1$ for a row vector of $1$'s, whose dimension is clear from the context. In this article we discuss sufficient conditions for almost sure convergence of $\C_n$, $S_n$, $\N_n$ after suitable scaling.

For $n\geq 1$, let $\FF_{n}$ be the $\sigma$-field generated by $\C_0$ and $\Lp\R_m, \Chi_m\Rp_{m=1}^{n}$. The basic assumptions about the evolution of the urn are the following.

\begin{assumption} \label{Assmp: basic}
The adapted sequence $((\Chi_n, \R_n), \FF_n)_{n\geq 1}$ satisfies:
\bei
    \item The nonzero vector $\C_0$ has almost surely nonnegative entries and finite mean. \label{Assmp: init}
    \item For $n\geq 1$ and $i\in[K]$,
    $\Prob\Lp\Chi_n=\bds{e}_i|\FF_{n-1}\Rp=\cslnmi.$
    \item Each of the replacement matrices $(\R_n)_{n\geq 1}$ have finite mean.
    \item The variables $\Chi_n$ and $\R_n$ are conditionally independent given $\FF_{n-1}$.
\ee
\end{assumption}

We also assume that the replacement matrices are ``close'' to an irreducible replacement matrix in an appropriate sense. Recall that, a square matrix $\bds{A}$ with nonnegative entries is called \textit{irreducible} if, for any $i, j \in [K]$, there exists a positive integer $N\equiv N_{ij}$, such that the $(i,j)$-th entry of $\bds{A}^N$ is positive. By Perron-Frobenius theory, the \textit{dominant eigenvalue} of an irreducible matrix, namely the eigenvalue having the largest real part, is simple and positive and has corresponding left and right eigenvectors with all coordinates strictly positive.

For a matrix $\bds{A}$, we use the operator norm $\mynorm{\bds{A}} := \max_i\sum_{j}\left|A_{ij}\right|$.
Further, for an event $A$, its indicator function is denoted by $\bone_A$.
The conditional expectations of the replacement matrices, denoted as $\bH_{n-1} := \CExp{\R_n}{n-1}$, are called the \textit{generating matrices} and their truncated version, denoted as $\tH_{n-1} := \CExp{\R_n \indic{\mynorm{\R_n}\le n}}{n-1}$, are called \textit{truncated generating matrices}.

\begin{assumption}\label{Assmp: ae}
There exists a (possibly random) almost surely irreducible matrix $\bH$ with nonnegative entries, such that the truncated generating matrices $\tH_n$ converge to $\bH$ in Cesaro sense in the operator norm almost surely, i.e., $\frac{1}{n}\sum_{k=0}^{n-1} \mynorm{\tH_{k}-\bH}\to 0$. The dominant eigenvalue of $\bH$ is $\pfval$ and $\lpf$ is the unique left eigenvector of $\bH$ corresponding to $\pfval$ such that $\lpf$ has all coordinates strictly positive and is normalized to be a probability vector.
\end{assumption}

In \cite{AthreyaNey}, the replacement matrices formed an i.i.d.\ sequence, with finite $L \log_+ L$ moments for the entries and irreducible mean matrix, which was additionally assumed to be aperiodic. Using branching process techniques, it was shown in \cite{AthreyaNey}, Chapter~V.9.3 that the normalized color count and composition vectors converge almost surely to the (normalized to probability) left eigenvector of the mean matrix corresponding to its dominant eigenvalue. In this article, we establish almost sure convergence for i.i.d.\ replacement matrices under $L^1$ conditions alone, as a significant relaxation of $L \log_+ L$ condition in \cite{AthreyaNey}. The mean replacement matrix is taken to be irreducible alone, aperiodicity is not needed. We extend the result to an adapted sequence also, under  $L \log_+ L$ condition, when the replacement matrices are suitably majorized. A referee has pointed out an unpublished manuscript~\cite{ZhangNew}. It became available on the preprint server arXiv long after the initial submission of the present work. The manuscript~\cite{ZhangNew} obtained similar results independently and in parallel, using different stochastic approximation method. The stochastic approximation used here enables one to obtain a simpler proof where the associated differential equation is simpler. In the process, we develop Theorem~\ref{Thm: SA} on Stochastic Approximation which uses only Cesaro negligibility of the error sequence and can be of independent interest.

In Section~\ref{sec: lv}, we state the model and the main results. Theorem~\ref{thm: iid} extends the result for i.i.d.\ replacement matrices in \cite{AthreyaNey, Zhang12} under $L^1$ condition only. Corollary~\ref{cor: L log L} studies adapted replacement matrices, majorized by a $L \log_+ L$ random variable.
In Section~\ref{sec:urn}, the main tool for the proof -- an appropriate result on stochastic approximation -- is provided and the main results are proven. Finally, in Section~\ref{sec:erw}, we apply the results for the urn to study a delayed elephant random walk on $d$-dimensional nonnegative orthant with randomly reinforced memory.

\section{Main Results} \label{sec: lv}

To obtain the appropriate convergence results, we make further uniform integrability type conditions on the replacement matrices.

\begin{assumption} \label{Assmp: unif tail}
The sequence $(\mynorm{\R_n})$ satisfy one of the following:
\ba
\item \label{Assmp: maj} The distributions of $\mynorm{\R_n}$ are majorized: there exists $c\in (0,\infty)$ and a positive random variable $R$ with finite expectation such that, for all $x>0$, $$\Prob\Lp\mynorm{\R_n}>x\Rp \le c \Prob(R>x).$$

\item \label{Assmp: alt tail} For some nonnegative function $\phi$ on $[0, \infty)$ which is eventually positive and nondecreasing, satisfying $\sum 1/(n\phi(n))< \infty$ and $x/\phi(x)$ eventually monotone nondecreasing, we have the bounded moment condition: $$\sup_n \Exp(\mynorm{\R_n}\phi(\mynorm{\R_n}) < \infty.$$
\ee
\end{assumption}

\begin{remark} 
An example of $\phi(x) = (\log_+ x)^p$ for some $p>1$, has been considered by~\cite{Zhang12}. Other choices include $\phi(x) = \log_+x (\log_+ \log_+ x)^p$ for some $p>1$.
\end{remark}

\begin{remark}
It is interesting to note that the majorizing condition in Assumption~\ref{Assmp: unif tail}\eqref{Assmp: alt tail} is on the unconditional distribution of the replacement matrices $(\R_n)$, in contrast to the conditions~(3.3)--(3.5) of Theorem~3.1 of~\cite{ZhangNew}.
\end{remark}

\begin{remark} \label{Rem: L1}
Both Assumptions~\ref{Assmp: unif tail}\eqref{Assmp: maj} and~\ref{Assmp: unif tail}\eqref{Assmp: alt tail} imply uniform integrability of  $\Lp\mynorm{\R_n}\Rp$ and, hence, of $\Lp\R_n\indic{\mynorm{\R_n}\le n}\Rp$. Thus, under either condition, the convergence in Assumption~\ref{Assmp: ae} is also in $L^1$.
\end{remark}

\begin{theorem} \label{Main}
Under Assumptions~{\normalfont \ref{Assmp: basic}},~{\normalfont \ref{Assmp: ae}} and~{\normalfont \ref{Assmp: unif tail}}, we have, almost surely, as well as in $L^1$,
\begin{equation}\label{strong conv}
  \frac{\C_n}{S_n} \to \bds{\pi}_{\bH}; \quad \frac{S_n}n \to \lambda_{\bH}; \quad
  \frac{\C_n}n \to \lambda_{\bH} \bds{\pi}_{\bH}; \quad \text{and} \quad
  \frac{\N_n}n \to \bds{\pi}_{\bH}.
\end{equation}
\end{theorem}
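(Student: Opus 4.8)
The plan is to recast the proportion vector $\bX_n := \C_n/S_n$ as a stochastic approximation scheme driven by a Lotka--Volterra vector field, prove $\bX_n \to \lpf$ by the ODE method, and then read off the other three limits. Writing out $\bX_n - \bX_{n-1}$ from $\C_n = \C_{n-1}+\Chi_n\R_n$ gives
\[
\bX_n - \bX_{n-1} = \frac{1}{S_n}\Lp \Chi_n\R_n - \bX_{n-1}\Lp S_n - S_{n-1}\Rp\Rp,
\]
so $\gamma_n := 1/S_n$ is a random step size and $S_n - S_{n-1} = \Chi_n\R_n\1^T$. Using the conditional independence of $\Chi_n$ and $\R_n$ together with the color-choice rule $\CExp{\Chi_n}{n-1} = \bX_{n-1}$ from Assumption~\ref{Assmp: basic}, I compute
\[
\CExp{\Chi_n\R_n}{n-1} = \bX_{n-1}\bH_{n-1}, \qquad \CExp{S_n - S_{n-1}}{n-1} = \bX_{n-1}\bH_{n-1}\1^T.
\]
Replacing $\bH_{n-1}$ first by its truncation $\tH_{n-1}$ and then by the limit $\bH$, the mean field is $F(\x) := \x\bH - \Lp\x\bH\1^T\Rp\x$ on the simplex, and the recursion assumes the canonical form $\bX_n - \bX_{n-1} = \gamma_n\Lp F(\bX_{n-1}) + \text{(martingale increment)} + \text{(remainder)}\Rp$.

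Next I would invoke the stochastic approximation result established in Section~\ref{sec:urn}, checking its hypotheses. For the step size I must first show $S_n \asymp n$ almost surely, so that $\gamma_n \asymp 1/n$: confinement of $\bX_{n-1}$ to the simplex gives $\sigma(\bH) \le \bX_{n-1}\bH\1^T \le \mynorm{\bH}$, which together with the Cesàro convergence of $\tH_n$ (Assumption~\ref{Assmp: ae}) and a law of large numbers for the increments pins $S_n/n$ between two positive constants. For the noise, I truncate $\R_n$ at level $n$: the variance part is summable because
\[
\sum_n \frac{1}{n^2}\,\Exp\LT \mynorm{\R_n}^2\indic{\mynorm{\R_n}\le n}\RT \lesssim \sup_n \Exp\LT\mynorm{\R_n}\RT < \infty,
\]
and the discarded large jumps are negligible because $\sum_n \Prob\Lp\mynorm{\R_n}>n\Rp < \infty$; both bounds are supplied by Assumption~\ref{Assmp: unif tail}. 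The leftover remainder is the truncation bias $\tH_{n-1} - \bH$, which enters only through its Cesàro average and hence vanishes by Assumption~\ref{Assmp: ae}.

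I would then analyze the ODE $\dot\x = F(\x)$. Its equilibria in the interior of the simplex are the probability eigenvectors of $\bH$, of which, by Perron--Frobenius and irreducibility, $\lpf$ is the only one, with $F(\lpf)=0$ and $\lpf\bH\1^T = \pfval$; equilibria on the boundary exist but are unstable. The Jacobian of $F$ at $\lpf$, restricted to the tangent space of the simplex, has spectrum $\LP\mu - \pfval : \mu\ \text{an eigenvalue of}\ \bH,\ \mu\neq\pfval\RP$, all with negative real part, so $\lpf$ is asymptotically stable, while each boundary equilibrium is a repeller along directions entering the interior, again by irreducibility. Combining the attractor property of $\lpf$ with the standard non-convergence to unstable equilibria results for stochastic approximation yields $\bX_n \to \lpf$ almost surely.

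Finally I transfer to the other limits. Since $\CExp{S_n - S_{n-1}}{n-1} = \bX_{n-1}\bH_{n-1}\1^T \to \lpf\bH\1^T = \pfval$ and the centered increments obey a martingale law of large numbers (their uniform integrability is Remark~\ref{Rem: L1}), Toeplitz averaging gives $S_n/n \to \pfval$, whence $\C_n/n = (S_n/n)\bX_n \to \pfval\lpf$; the same averaging applied to $\N_n/n = \tfrac1n\sum_{k\le n}\Chi_k$, using $\CExp{\Chi_k}{k-1} = \bX_{k-1}\to\lpf$, gives $\N_n/n\to\lpf$, and $L^1$ convergence follows from the almost sure convergence and uniform integrability. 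The main obstacle is the noise control under only $L\log_+ L$ (and, for i.i.d.\ replacement matrices, $L^1$) moments: the truncation bias $\tH_{n-1}-\bH$ is \emph{not} summable against $\gamma_n\sim 1/n$ --- summability would force an $L\log_+ L$ hypothesis --- so one cannot sum it, and the crux is to set up the stochastic approximation lemma so that a bias vanishing merely in Cesàro mean (a Kushner--Clark type condition) still does not obstruct convergence.
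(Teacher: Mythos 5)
Your overall architecture is exactly the paper's: recast $\C_n/S_n$ as a stochastic approximation with random step size $1/S_n$, truncate $\R_n$ at level $n$, pin $S_n/n$ between $\sigma(\bH)$ and $\mynorm{\bH}$, use a Kushner--Clark-type theorem that only needs the error terms to vanish in Ces\`aro mean, and then transfer to $S_n/n$, $\C_n/n$, $\N_n/n$. But two steps as you have written them do not go through.

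First, the noise control. Your displayed bound
$\sum_n n^{-2}\,\Exp\LT\mynorm{\R_n}^2\indic{\mynorm{\R_n}\le n}\RT \lesssim \sup_n\Exp\LT\mynorm{\R_n}\RT$
is false: the only inequality available from a first moment is $\Exp\LT\mynorm{\R_n}^2\indic{\mynorm{\R_n}\le n}\RT\le n\,\Exp\LT\mynorm{\R_n}\RT$, which leaves a divergent harmonic sum. Likewise $\sum_n\Prob\Lp\mynorm{\R_n}>n\Rp<\infty$ does not follow from a uniform $L^1$ bound (Markov gives only $1/n$). Both facts genuinely require the specific structure of Assumption~\ref{Assmp: unif tail}: under the majorization (a) one interchanges the sums against the \emph{common} tail $\Prob(R>r)$ (giving $\le 8c\,\Exp R$ and $\sum_n c\,\Prob(R>n)\le c\,\Exp R$), and under (b) one uses $x/\phi(x)$ nondecreasing and the summability of $1/(n\phi(n))$. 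This is the content of the paper's Proposition on truncated martingale averages, and it is where the hypothesis actually earns its keep; as written your justification would ``prove'' the result under $\sup_n\Exp\mynorm{\R_n}<\infty$ alone, which is not what the theorem claims.

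Second, the ODE step. You identify $\lpf$ as the unique interior equilibrium, assert boundary equilibria exist but are unstable, and invoke local asymptotic stability plus non-convergence-to-unstable-equilibria theorems. This is not sufficient: the limit set theorem for stochastic approximation delivers convergence to an internally chain-transitive invariant set of the flow, and for Lotka--Volterra fields such sets may include periodic orbits or heteroclinic cycles, not just equilibria; moreover the non-convergence theorems you would cite require noise non-degeneracy conditions you never verify. (Incidentally, irreducibility of $\bH$ forces the support of any boundary equilibrium to be a closed class, so there are \emph{no} boundary equilibria --- your claim that they exist is wrong, though harmlessly so.) The paper avoids all of this by using the stronger and cleaner fact (Proposition~3.3 of \cite{GangoMaulik}) that the constant $\z(t)\equiv\lpf$ is the \emph{unique entire solution} $\z:\RR\to S$ of $\dot\x=h_{\bH}(\x)$ in the simplex; that is precisely the hypothesis of the Kushner--Clark-type Theorem~\ref{Thm: SA} and immediately yields $\C_n/S_n\to\lpf$. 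You need either that uniqueness statement or a global Lyapunov argument; local stability of the unique equilibrium does not close the argument.
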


\begin{remark}
It is interesting to compare the assumptions in this article with those required to establish convergence in probability and in $L^1$ in \cite{GangoMaulik}. Note that Assumption~\ref{Assmp: basic} is same as Assumption~3.1 of \cite{GangoMaulik}. Assumption~\ref{Assmp: ae} corresponds to Assumptions~3.2 and~3.3 of \cite{GangoMaulik}. Assumption~3.2 of \cite{GangoMaulik} on the properties of the matrix $\bH$ remain unchanged. However, given the almost sure convergence in conclusion, we naturally strengthen the assumption to almost sure convergence. The methods of proof differ however. We further require the truncated generating matrices $\tH_n$ to converge to $\bH$ instead of the generating matrices, as considered in \cite{GangoMaulik}. In Remark~\ref{Rem: tail ce} sufficient conditions will be provided for convergence of $\bH_n$, as in \cite{GangoMaulik}, to work. Finally, uniform integrability condition in Assumption~3.4 of \cite{GangoMaulik} has been appropriately strengthened in Assumption~\ref{Assmp: unif tail}.
\end{remark}

The case of i.i.d.\ replacement matrices have been considered in \cite{AthreyaNey, Zhang12}. Assumption~\ref{Assmp: unif tail}\eqref{Assmp: maj} holds, when $(\R_n)$ is i.i.d.\ with finite mean. Further, since $R_n$ is independent of $\FF_{n-1}$ and i.i.d.\ with finite mean, Assumption~\ref{Assmp: ae} holds. Thus, we relax $L \log_+ L$ condition of \cite{AthreyaNey, Zhang12} to existence of first moment alone and obtain the following theorem as a corollary to Theorem~\ref{Main}. As has been noted before, a referee has informed us about the unpublished manuscript~\cite{ZhangNew}, which was developed independently and in parallel, and considered the following theorem in its Corollary~2.1. However, a different stochastic approximation has been used here, which has significantly simplified the proof.

\begin{theorem} \label{thm: iid}
We consider an urn model with nonzero initial configuration $\C_0$ having almost surely nonnegative entries and finite mean; as well as an i.i.d.\ sequence of replacement matrices $(\R_n)$, independent of $\C_0$ with irreducible mean matrix and $\Exp\Lp\mynorm{\R_1}\Rp<\infty$. Also, let
\[
\Prob\Lp\Chi_n=\bds{e}_i|\C_0,(\R_m)_{m=1}^{n-1}, (\Chi_m)_{m=1}^{n-1}\Rp=\cslnmi
\]
hold for $n\geq 1$ and $i\in[K]$, and let $\Chi_n$ be independent of $(\R_m)_{m\ge n}$. Then the convergence~\eqref{strong conv} in Theorem~{\normalfont \ref{Main}} holds both almost surely and in $L^1$.
\end{theorem}

It is natural to also consider the following variant of Assumption~\ref{Assmp: ae}.

\begin{assumption}\label{Assmp: aeprime}
There exists a (possibly random) matrix $\bH$ with nonnegative entries, which is almost surely irreducible, such that the generating matrices $\bH_n$ converge to $\bH$ in Cesaro sense in the operator norm almost surely i.e.,
$\frac1n \sum_{k=0}^{n-1} \mynorm{\bH_{k}-\bH}\to 0$ almost surely.
\end{assumption}

\begin{remark} \label{Rem: tail ce}
By Kronecker's lemma, Assumption~\ref{Assmp: aeprime} and the condition
\begin{equation}
\sum_{n=1}^{\infty}\frac{1}{n}\CExp{\mynorm{\R_{n}}\indic{\mynorm{\R_{n}}>n}}{n-1}<\infty\text{ almost surely,} \label{eq: tail ce neg}
\end{equation}
imply Assumption~\ref{Assmp: ae}.
\end{remark}

\begin{remark}
It may be noted that the analysis in~\cite{ZhangNew} is done under its assumption~(3.1) alone, which is same as Assumption~\ref{Assmp: aeprime}. The additional truncated moment condition~\eqref{eq: tail ce neg} corresponds to~(3.4) in~\cite{ZhangNew}.
\end{remark}

Corollary~\ref{cor: L log L} extends the result on i.i.d.\ sequence of replacement matrices given in \cite{AthreyaNey} to adapted sequence with $L \log_+ L$ moment for the majorizing random variable. The moment condition on the majorizing random variable in Assumption~\ref{Assmp: unif tail}\eqref{Assmp: maj} is strengthened to replace Assumption~\ref{Assmp: ae} by Assumption~\ref{Assmp: aeprime}.

\begin{corollary} \label{cor: L log L}
Consider an adapted sequence $\Lp \Lp \Chi_n, \R_n\Rp, \FF_n\Rp$ satisfying Assumptions~{\normalfont\ref{Assmp: basic}},~{\normalfont\ref{Assmp: unif tail}\eqref{Assmp: maj}} and~{\normalfont\ref{Assmp: aeprime}}, where the majorizing random variable $R$ in Assumption~{\normalfont\ref{Assmp: unif tail}\eqref{Assmp: maj}} additionally satisfies $\Exp(R \log_+ R) < \infty$. Then the convergence~\eqref{strong conv} in Theorem~{\normalfont \ref{Main}} holds both almost surely and in $L^1$.
\end{corollary}

\begin{remark}
It may be noted that Assumption~\ref{Assmp: unif tail}\eqref{Assmp: maj} involves majorization of the unconditional distribution of the replacement matrices in contrast to the conditional distribution in~\cite{ZhangNew}.
\end{remark}

Next corollary uses a generalization of Assumption~2.2(a) in~\cite{Zhang12}. See also Remark~3.1 of~\cite{ZhangNew}.
No additional condition is required to replace Assumption~\ref{Assmp: ae} by Assumption~\ref{Assmp: aeprime}.

\begin{corollary} \label{cor: phi}
Assumptions~{\normalfont \ref{Assmp: basic}},~{\normalfont \ref{Assmp: unif tail}\eqref{Assmp: alt tail}} and~{\normalfont \ref{Assmp: aeprime}} imply the convergence~\eqref{strong conv} in Theorem~{\normalfont \ref{Main}} both almost surely and in $L^1$.
\end{corollary}

\section{Proofs of the Main Results} \label{sec:urn}
In the first subsection, we provide the relevant result on stochastic approximation and another technical result. In the last two subsections, we prove the main theorem and the corollaries of Section~\ref{sec: lv}.

\subsection{Stochastic Approximation and Other Results}
The main tool for proving Theorem~\ref{Main} is Stochastic Approximation. We shall use the following theorem on Stochastic Approximation, which follows from a special case of Theorem~9.2.8 of \cite{Duflo}, which in turn was adapted from \cite{KushnerClark}.

\begin{theorem} \label{Thm: SA}
    Let $S$ be a compact and convex subset of $\RR^K$. Let $h:S\to\RR^K$ be
    a continuous function. Let $\Lp a_n\Rp_{n=0}^\infty$ be a nonincreasing
    sequence of positive real numbers, called the step sizes, satisfying
    \begin{equation} \label{eq: step order}
        0<\liminf {n}{a_n} \le \limsup {n}{a_n} <\infty.
    \end{equation}
    Let $\Lp\bgamma_n\Rp_{n=0}^\infty$ be a sequence, called the error
    sequence, which is Cesaro negligible, i.e., $\frac1n\bGamma_n\to\0$,
    where $\bGamma_n := \sum_{i=0}^n \bgamma_i$. Let
    $\Lp\x_n\Rp_{n=0}^\infty$ be a sequence that takes values in $S$ and
    evolves as
    \begin{equation}\label{eq: SA}
        \x_{n+1}=\x_n+a_n h(\x_n)+a_n\bgamma_n.
    \end{equation}
    If the ODE $\dot{\x}=h(\x)$ has a unique solution $\z:\RR\to S$, then
    $\lim_{n\to\infty}\x_n=\z(0)$.
\end{theorem}

The above results uses only Cesaro negligibility of the error sequence, rather than its summability or negligibility. The result can be of independent interest.

It should also be noted that general form of step sizes allows us to develop stochastic approximation for $(\C_n/S_n)$ in contrast to $(C_n/n)$ considered in~\cite{ZhangNew}. As $(\C_n/S_n)$ already lies in the probability simplex and the corresponding differential equation is of the standard Lotka-Volterra type, our analysis becomes significantly simpler.

\begin{proof}[Proof of Theorem~\ref{Thm: SA}]
It is immediate from~\eqref{eq: step order} that $a_n\to 0$ and $\sum_{n=0}^\infty a_n=\infty$. Then, using Theorem~9.2.8 of \cite{Duflo}, it is enough to show that, for all $T>0$,
\[
\lim_{n\to\infty}\sup_{n\leq j\leq\Tau{n}{T}}\Lone{\sum_{k=n}^j a_k\bgamma_k}=0,
\]
where $\Tau{n}{T}:=\inf\LP k\geq n: a_n+\cdots+a_k\geq T\RP,$ and $\Lone{\cdot}$ is any norm in $\RR^K$.
Fix $T>0$. First observe that
\[
a_k\bgamma_k = \Lp a_k\bGamma_k - a_{k-1}\bGamma_{k-1}\Rp +
a_k a_{k-1}\bGamma_{k-1} \Lp \frac1{a_k} - \frac1{a_{k-1}} \Rp.
\]
Since $(a_n)$ is nonincreasing, for $n\leq j\leq\Tau{n}{T}$, we have,
\[
\Lone{\sum_{k=n}^j a_k\bgamma_k} \le \Lone{a_j\bGamma_j} + \Lone{a_{n-1}\bGamma_{n-1}} + a_n \sup_{k\ge n-1} \Lone{a_k \bGamma_k} \frac{1}{a_j}.
\]
Hence, we have
\[
\sup_{n\leq j\leq\Tau{n}{T}}\Lone{\sum_{k=n}^j a_k\bgamma_k} \le \sup_{j\ge n-1} \Lone{a_j\bGamma_j} \Lp 2 + na_n \cdot \frac{\Tau{n}{T}}{n} \cdot \frac1{\Tau{n}{T}a_{\Tau{n}{T}}}\Rp.
\]
Using~\eqref{eq: step order} and $\frac{1}{n}\bGamma_n\to 0$, it is enough to show that $\Tau{n}{T}/n$ is bounded for any $T>0$. Since $\liminf na_n >0$, choose $a^*>0$ such that $n a_n > a^*$ for large enough $n$. Also $a_n < T$ for large enough $n$. Hence, for large enough $n$, $\Tau{n}{T}>n$ and
\[
a^*\left[\log \frac{\Tau{n}{T}-1}{n-1} -1\right] \le \frac{a^*}{n} + \cdots + \frac{a^*}{\Tau{n}{T}-1} \le a_n + \ldots + a_{\Tau{n}{T}-1} \le T.
\]
Thus, $\Tau{n}{T}/n \le (\Tau{n}{T}-1)/(n-1) \le \exp(T/a^*+1)$, as required.
\end{proof}

We also need the following technical result for the average of a dominated adapted sequence, with the terms appropriately centered.

\begin{lemma} \label{lemma: trunc mg}
Let $(X_n, \FF_n)$ be adapted, and $|X_n|\le V_n$, where $(V_n)$ satisfies
\begin{itemize}
\item[-] either a majorization condition as in Assumption~{\normalfont \ref{Assmp: unif tail}\eqref{Assmp: maj}}, namely there exists $c'\in(0,\infty)$ and a positive random variable $V$ with finite expectation satisfying $\Prob(V_n>x) \le c'\Prob(V>x)$ for all $x>0$,
\item[-] or a bounded moment condition as in Assumption~{\normalfont \ref{Assmp: unif tail}\eqref{Assmp: alt tail}}, namely a function $\phi$ as in Assumption~{\normalfont \ref{Assmp: unif tail}\eqref{Assmp: alt tail}}, satisfying $\sup_n \Exp(V_n \phi(V_n)) <\infty$,
\end{itemize}
then, $\frac1n \sum_{k=1}^n \Lp X_k - \CExp{X_k \indic {V_k\le k}}{k-1} \Rp \to 0$ almost surely.

Further, if $(V_n)$ is identically distributed with finite mean and if $V_n$ is independent of $\FF_{n-1}$ for all $n$, then  $\frac1n \sum_{k=1}^n \Lp X_k - \CExp{X_k}{k-1} \Rp \to 0$ almost surely.
\end{lemma}

\begin{proof}
The first part of the proof, where centering is done by the truncated moment, is similar to the derivation of~{(2.20)} of Theorem~2.19 of \cite{HH} under majorization condition. Under bounded moment condition, we additionally use, for all large enough $n$, $\Prob(V_n>n) \le \frac1{n \phi(n)} \Exp(V_n \phi(V_n))$ and $\CExp{V_n^2 \indic{V_n\le n}}{n-1} \le \frac{n}{\phi(n)}\CExp{V_n\phi(V_n)}{n-1}$.

Finally, when $(V_n)$ is identically distributed with finite mean and $V_n$ is independent of $\FF_{n-1}$, the majorization condition holds. Also
\[
\CExp{X_n \indic{V_n>n}}{n-1}\le \Exp{V_n \indic{V_n| > n}} \to 0
\qquad\qquad\mbox{almost surely,}
\]
by independence of $V_n$ from $\FF_{n-1}$, which takes care of the remaining term in the centering.
\end{proof}

\subsection{Proof of Theorem~\ref{Main}} 
First we establish convergence of $(\C_n/S_n)$ to $\lpf$ by rewriting the
evolution equation~\eqref{EvolutionEqn} in the form of \eqref{eq: SA} and checking the conditions of Theorem~\ref{Thm: SA}. Define $Y_0:=S_0$ and for $n\geq 1$, $Y_n:=S_n-S_{n-1}=\Chi_n\R_n\bds{1}^T$. So $Y_n$ is the total amount added to the urn after the $n$-th draw. Observe that, for $n\geq 1$,
\begin{equation}
  0\leq Y_n=\Chi_n\R_n\bds{1}^T\leq\mynorm{\R_n}. \label{Y bd}
\end{equation}
We rewrite the evolution equation \eqref{EvolutionEqn} as
\[
\csd{n} = \csd{n-1} + \frac{1}{S_n}\hh\Lp\csd{n-1}\Rp + \frac{\mgdh_n}{S_n} + \frac{\bxi_n}{S_n},
\]
where $\hh$, $\mgdh_n$, $\bxi_n$ are defined as follows. The drift $\hh$, indexed by $K\times K$ matrices $\bH$, is defined as $\hh\Lp\bX\Rp := \bX\bH - \bX\Lp\bX\bH\bds{1}^T\Rp$. For each $n\geq 1$,
\[
\mgdh_n := \Lp\Chi_n\R_n-\csd{n-1}Y_n\Rp - \CExp{\Lp\Chi_n\R_n-\csd{n-1}Y_n\Rp \indic{\mynorm{\R_n}\le n}}{n-1},
\]
is the martingale difference term, and $\bxi_n := h_{\tH_{n-1}-\bH}\Lp\csd{n-1}\Rp$ is the adjusted truncated conditional expectation term.
The $\Lp\csl{n}\Rp$ takes values in the closed bounded convex set of probability vectors in $\RR^K$. The corresponding differential equation is $\dot{\x}=\hh\Lp\x\Rp$ with $\hh(\x)$, a quadratic polynomial in $\x$, is continuous. Further, from Proposition~3.3 of~\cite{GangoMaulik}, the differential equation has unique solution in the probability simplex given by $\x(t)=\lpf$ for all $t$. Now we check the conditions on the step sizes. Since $S_n$ is partial sum of nonnegative $(Y_n)$, the step sizes are nonincreasing. Now we check~\eqref{eq: step order}. Define
\begin{multline*}
\eta_n := \frac{S_n}{n}-\frac{1}{n}\sum_{m=1}^n\frac{\C_{m-1}}{S_{m-1}}\bH\1^T\\
= \LT\frac{S_n}n - \frac1n \sum_{m=1}^n \CExp{Y_m \indic {\mynorm{\R_m}\le m}}{m-1}\RT + \frac1n \sum_{m=1}^n \csd{m-1}\Lp\tH_{m-1}-\bH\Rp\bds{1}^T.
\end{multline*}
Using the bound~\eqref{Y bd} and Lemma~\ref{lemma: trunc mg}, $\frac{S_n}n - \frac1n \sum_{m=1}^n \CExp{Y_m \indic {\mynorm{\R_m}\le m}}{m-1}\to 0$ almost surely. Also, $\frac1n \sum_{m=1}^n \csd{m-1}\Lp\tH_{m-1}-\bH\Rp\bds{1}^T\to 0$, using Assumption~\ref{Assmp: ae}. Thus $\eta_n\to 0$ almost surely.
Let $\sigma(\bH)$ be the least absolute row sum of $\bH$. Since $\bH$ is irreducible, no row can be the zero vector and hence $\sigma(\bH)>0$. Using $\eta_n\to 0$ and
$
0 < \sigma\Lp\bH\Rp \leq \frac{1}{n}\sum_{m=1}^n\frac{\C_{m-1}}{S_{m-1}}\bH\1^T \leq \mynorm{\bH} < \infty
$
we get that the step sizes $(1/S_n)$ satisfy condition~\eqref{eq: step order}. Finally, each coordinate of $(\mgdh_n)$ is Cesaro negligible using Lemma~\ref{lemma: trunc mg} and the bound $\Lone{\Chi_n\R_n-\csd{n-1}Y_n}\leq 2\mynorm{\R_n}$. Also, $(\bxi_n)$ is Cesaro negligible by Assumption~\ref{Assmp: ae}.

Then from Theorem~\ref{Thm: SA} we have the almost sure convergence of the proportion vector $\C_n/S_n$ to $\bds{\pi}_{\bH}$.
Using $\C_n/S_n\to\lpf$ we get
$S_n/n-\eta_n=\frac{1}{n}\sum_{m=1}^n\frac{\C_{m-1}}{S_{m-1}}\bH\1^T\to\lpf\bH\1^T=\pfval$. Since $\eta_n\to 0$, we get $S_n/n\to\pfval$, and hence $\C_n/n \to \pfval \bds{\pi}_{\bH}$ almost surely.
Now
$
\frac{\N_n}{n} - \frac{1}{n}\sum_{m=0}^{n-1}\csd{m} = \frac{1}{n}\sum_{m=1}^n\Lp\Chi_m-\csd{m-1}\Rp,
$
being a scaled $L^2$-bounded martingale, is negligible. The almost sure convergence of $\N_n/n$ to $\lpf$ then follows from that of $\C_n/S_n$.

Being bounded, $\csl{n}$ and $\N_n/n$ converge in $L^1$. We next consider $S_n/n$.
From~\eqref{Y bd}, $0\leq Y_n\leq\mynorm{\R_n}$ for $n\ge 1$ and from Assumption~\ref{Assmp: basic}~(\ref{Assmp: init}) $Y_0= \C_0 \1^T$ is integrable. As noted in Remark~\ref{Rem: L1}, Assumption~\ref{Assmp: unif tail} implies uniform integrability of $(\mynorm{\R_n})$. Hence $\Lp Y_n\Rp$ and $\Lp S_n/n\Rp$ are also uniformly integrable. Hence $S_n/n$ converges $L^1$. Also, so does $\C_n/n$ using Lemma~3.5 of \cite{GangoMaulik}.

\subsection{Proofs of Corollaries~\ref{cor: L log L} and~\ref{cor: phi}}

Assumption~\ref{Assmp: unif tail}\eqref{Assmp: maj} with $L \log_+ L$ moment condition gives
\begin{align*}
&\sum_{n=1}^{\infty} \frac1n \Exp\Lp\mynorm{\R_{n}}\indic{\mynorm{\R_{n}}>n}\Rp \le c \sum_{n=1}^{\infty} \sum_{j=n}^\infty \frac1n \Exp\Lp R\indic{R\in (j, j+1]}\Rp\\
&\qquad \qquad \le c \sum_{j=1}^{\infty} \Exp\Lp R\indic{R\in (j, j+1]}\Rp \sum_{n=1}^j \frac1n \le c \Exp\Lp R (1+ \log_+ R)\Rp < \infty.
\end{align*}
This implies~\eqref{eq: tail ce neg} and, using Remark~\ref{Rem: tail ce} and Theorem~\ref{Main}, proves Corollary~\ref{cor: L log L}.

Similarly, Assumption~\ref{Assmp: unif tail}\eqref{Assmp: alt tail} gives, for large enough $m$
\[
\sum_{n=m}^\infty \frac1n \Exp\Lp\mynorm{\R_{n}}\indic{\mynorm{\R_{n}}>n}\Rp \le \sum_{n=m}^\infty \frac1{n \phi(n)} \Exp\Lp\mynorm{\R_{n}}\phi\Lp\mynorm{\R_{n}}\Rp\Rp<\infty.
\]
This again implies~\eqref{eq: tail ce neg} and proves Corollary~\ref{cor: phi}.

\section{Applications to Elephant Random Walk}\label{sec:erw}

We consider a delayed elephant random walk (ERW) on nonnegative integer lattice of dimension $d$ with randomly reinforced memory. ERW was introduced in \cite{ERWoriginal}. We analyze the model using the results obtained in this article utilizing an interesting connection between the urn model and ERW discovered in \cite{Bertoin}.

The random walk is parametrized by three parameters, namely, mean memory reinforcement parameter $a>0$ and two mixing parameters $p, q\in(0,1)$ for delay and shift respectively. At every epoch, a past epoch is selected with probability proportional to its memory and the memory of the selected epoch is randomly reinforced. If there was a delay at the selected epoch, then the current epoch is also delayed with probability $1-p$ or else there is a unit movement in a randomly selected direction. If there was a shift at the selected epoch, it is repeated with probability $1-q$ or else there is a delay.

To construct the random walk, we consider three mutually independent i.i.d.\ sequences $(\bds{A}_n)_{n>1} := ((A_{0n},A_{1n},\dots,A_{dn}))_{n\geq 1}$, $(I_n)_{n\geq 1}$ and $(J_n)_{n\geq 1}$. The memory reinforcement $\bds{A}_n$ has nonnegative coordinates. The coordinates have common finite first moment $a$, but may have different distributions. The variable $I_n$ takes values $0, 1, \ldots, d$ with probabilities $1-p, p/d, \ldots, p/d$ respectively, while $J_n$ is a Bernoulli ($1-q$) random variable. Independent of these sequences, $\bds{U}$ is a random vector, uniform over $\LP\e_0,\dots,\e_d\RP$, where $\e_0$ is the zero vector in $\RR^d$ and, as before, for each $i\geq 1$, $\e_i$ is the $i$-th coordinate vector in $\RR^d$.

At epoch $n$, the elephant is at $\Loc_n$ and the step taken is $\X_n$, giving, $\Loc_n = \Loc_{n-1} + {\X_n}$. The step $\X_n$ takes values $\e_0, \e_1, \ldots, \e_d,$ corresponding to a delay or a unit step shift along one of the $d$ coordinate axes respectively. The elephant starts at the origin, i.e., $\Loc_0 = \e_0$, and the first step $\X_1$ is taken to be $\bds{U}$.

For $n\geq 1$, let $\mathcal F_n$ denote the $\sigma$-field generated by $\bds{U}$, $(I_m)_{m\leq n}$, $(J_m)_{m\leq n}$. An adapted sequence $((M_1^{(n)}, \dots, M_{n}^{(n)}), \FF_n)_{n\geq 1}$ denotes the memory of the elephant about the past epochs evolving over time. The memory sequence is initiated by taking $M_1^{(1)}=1$. At epoch $n>1$, the elephant chooses $\tau_n$, one of the past epochs, such that,
\[
\Prob\Lp\tau_n = u | \mathcal F_{n-1}\Rp = \frac{M_u^{(n-1)}}{ M_1^{(n-1)}+\cdots+M_{n-1}^{(n-1)}}, \mbox{ for } u = 1, \ldots, n-1.
\]
If the step $\X_{\tau_n}$, at the selected epoch $\tau_n$, was $\e_i$ for some $i = 0, 1, \ldots, d$, then the memory $M_{\tau_n}^{(n-1)}$ associated with the selected epoch is reinforced by $A_{in}$, that is $M_{\tau_n}^{(n)} = M_{\tau_n}^{(n-1)} + A_{in}.$ Other memories remain unchanged.

For $n>1$, the current step $\X_n$ is chosen as follows. If $\X_{\tau_n}$ was $\e_0$, then $\X_n$ becomes $\e_{I_n}$, that is, it is $\e_0$ with probability $1-p$, and, for $i=1, \dots, d$, it is $\e_i$ with probability $p/d$. If, on the other hand, $\X_{\tau_n}$ was $\e_i$ for some $i = 1, \ldots, d$, then $\X_n$ is $\e_{i J_n}$, that is, it takes values $\e_0$ and $\e_i$ with probability $q$ and $1-q$ respectively. Finally, the current epoch $n$ is assigned memory $1$, i.e., $M_n^{(n)} = 1$.

Theorem~\ref{thm: iid} and the connection between ERW and urn models from \cite{Bertoin} give the strong law behavior of $(\Loc_n)_{n\geq 0}$. Note that, it depends on the mixing parameters $p, q$, but not on the mean memory reinforcement parameter $a$.

\begin{theorem}
    Consider the delayed elephant random walk on the positive orthant in
    dimension $d$ with random reinforcement of memory, parametrized by $a$,
    $p$ and $q$ as described above. Then $\frac1n \Loc_n \to \frac{p}{d(p+q)} \1$ almost surely and in $L^1$.
\end{theorem}

\begin{proof}
It was noted in \cite{Bertoin} that the evolution of the ERW depends on
the moves at the selected past epoch rather than the selected epoch
itself.
Thus we consider an urn model with memory as objects categorized by the
types of moves.
Consider the vector of memory content of each type of step at epoch $n\geq 1$, denoted by $\boldsymbol{W}^{(n)}:=(W_0^{(n)}, \ldots, W_d^{(n)})$, where $W_i^{(n)}:=\sum_{1\leq k\leq n} M_k^{(n)}\indic{\X_k=\e_i}$ for $i = 0, 1, \ldots, d$. At epoch $n$, memory of the type of $\X_{\tau_n}$ is increased by the random reinforcement, while, memory of the type of $\X_n$ is increased by $1$. Thus $\Lp\boldsymbol{W}^{(n)}\Rp_{n\geq 0}$ behaves as an urn model of $(d+1)$ types (indexed by $0, 1, \ldots, d$) with i.i.d.\ replacement matrices having common mean 
\[
\R =
\begin{pmatrix}
  1-p+a & \frac{p}{d} & \frac{p}{d} & \cdots & \frac{p}{d}\\
  q & 1-q+a & 0 & \cdots & 0\\
  \vdots & \vdots & \ddots & \vdots & \vdots\\
  q & 0 & 0 & \cdots & 1-q+a
\end{pmatrix}.
\]
Clearly, the dominant eigenvalue of $\R$ is the common row sum $(1+a)$ and the corresponding left eigenvector normalized to probability is $\frac{1}{d(p+q)} \Lp dq, p, \ldots, p\Rp$.
Finally, note that, for $i = 1, \ldots, d$, the memory of the step at epoch $k$ can be reinforced by $1$ at epoch $k$ and, further in future epochs by a random amount if selected, and, hence, for $i=1,\ldots,d$
\begin{align*}
\frac1n W_i^{(n)} & = \frac1n \sum_{k=1}^{n} \Lp 1 + \sum_{l=k+1}^n A_{il} \indic{\tau_l = k} \Rp \indic{\X_k=\e_i}\\
 & = \frac1n L_{ni} + \frac{a}{n} \sum_{l=2}^n \indic{\X_{\tau_l}=\e_i} + \frac1n \sum_{l=2}^n (A_{il}-a) \indic{\X_{\tau_l}=\e_i}.\numberthis\label{eq: ERW1}
\end{align*}
By Theorem~\ref{thm: iid}, the composition vector $\frac1n {W_i^{(n)}} \to \frac{(1+a)p}{d(p+q)}$ and the count vector $\frac{1}{n}\sum_{l=2}^n \indic{\X_{\tau_l}=\e_i} \to \frac{p}{d(p+q)}$.
Finally, by Lemma~\ref{lemma: trunc mg}, the last term of \eqref{eq: ERW1} is negligible, with $(A_{il}+a)$ as the dominator. Hence the result follows.
\end{proof}

\section*{Acknowledgment}
The research of the second author was partly supported by MATRICS grant number MTR/2019/001448 from SERB, Govt.\ of India.

The authors also thank an anonymous referee for pointing out the unpublished manuscript~\cite{ZhangNew}.

\providecommand{\bysame}{\leavevmode\hbox to3em{\hrulefill}\thinspace}
\providecommand{\MR}{\relax\ifhmode\unskip\space\fi MR }
\providecommand{\MRhref}[2]{%
  \href{http://www.ams.org/mathscinet-getitem?mr=#1}{#2}
}
\providecommand{\href}[2]{#2}

\end{document}